\documentclass[english]{amsart}

\usepackage{amsmath}
\usepackage{amssymb}
\usepackage{amsthm}
\usepackage{amscd}
\usepackage{babel}
\usepackage[latin1]{inputenc}
\usepackage{graphicx}

\renewcommand{\subjclassname}{AMS \textup{2000} Mathematics Subject Classification:\ }

\newtheorem{teor}{Theorem}
\newtheorem{cor}{Corollary}
\newtheorem{prop}{Proposition}
\newtheorem{lem}{Lemma}
\newtheorem{defi}{Definition}
\newtheorem{con}{Conjecture}

\author{Jos\'{e} Mar\'{i}a Grau}
\address{Departamento de Matemáticas, Universidad de Oviedo\\ Avda. Calvo Sotelo, s/n, 33007 Oviedo, Spain}
\email{grau@uniovi.es}
\title{On $k$-Lehmer numbers}

\author{Antonio M. Oller-Marc\'{e}n}
\address{Centro Universitario de la Defensa\\ Ctra. Huesca s/n, 50090 Zaragoza, Spain} \email{oller@unizar.es}
\begin{document}
\maketitle
\begin{abstract}
Lehmer's totient problem consists of determining the set of positive integers $n$ such that $\varphi(n)|n-1$ where $\varphi$ is Euler's totient function. In this paper we introduce the concept of $k$-Lehmer number. A $k$-Lehmer number is a composite number such that $\varphi(n)|(n-1)^k$. The relation between $k$-Lehmer numbers and Carmichael numbers leads to a new characterization of Carmichael numbers and to some conjectures related to the distribution of Carmichael numbers which are also $k$-Lehmer numbers.
\end{abstract}
\subjclassname{11A25,11B99}

\section{Introduction}
Lehmer's totient problem asks about the existence of a composite number such that $\varphi(n)|(n-1)$, where $\varphi$ is Euler's totient function. Some authors denote these numbers by \emph{Lehmer numbers}.
In 1932, Lehmer (see \cite{LEH}) showed that every Lemher numbers $n$ must be odd and square-free, and that the number of distinct prime factors of $n$, $d(n)$, must satisfy $d(n)>6$. This bound was subsequently extended to $d(n)>10$. The current best result, due to Cohen and Hagis (see \cite{COH}), is that $n$ must have at least 14 prime factors and the biggest lower bound obtained for such numbers is $10^{30}$ (see \cite{PIN}). It is known that there are no Lehmer numbers in certain sets, such as the Fibonacci sequence (see \cite{LUC1}), the sequence of repunits in base $g$ for any $g\in [2, 1000]$ (see \cite{CIL}) or the Cullen numbers (see \cite{GRA}). In fact, no Lemher numbers are known up to date. For further results on this topic we refer the reader to \cite{BAN2,BAN1,LUC2,POM1,POM2}.

A \emph{Carmichael number} is a composite positive integer $n$ satisfying the congruence $b^{n-1}\equiv 1$ (mod $n$) for every integer $b$ relatively prime to $n$. Korselt (see \cite{KOR}) was the first to observe the basic properties of Carmichael numbers, the most important being the following characterization:

\begin{prop}[Korselt, 1899]
A composite number $n$ is a Carmichael number if and only if n is square-free, and for each prime $p$ dividing $n$, $p-1$ divides $n-1$.
\end{prop}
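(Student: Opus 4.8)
The plan is to prove both implications by reducing the single global congruence $b^{n-1}\equiv 1 \pmod n$ to congruences modulo the prime-power factors of $n$, using the Chinese Remainder Theorem (CRT) together with the existence of primitive roots and Fermat's little theorem.

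For the easier (``if'') direction, I would write the square-free $n$ as a product of distinct primes $n=p_1\cdots p_r$ and fix any $b$ with $\gcd(b,n)=1$. For each $i$, Fermat's little theorem gives $b^{p_i-1}\equiv 1 \pmod{p_i}$, and since $p_i-1\mid n-1$ by hypothesis, raising to the appropriate power yields $b^{n-1}\equiv 1 \pmod{p_i}$. Because the $p_i$ are distinct primes whose product is $n$, CRT lets me glue these congruences to conclude $b^{n-1}\equiv 1 \pmod n$; as $b$ was arbitrary, $n$ is a Carmichael number.

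For the (``only if'') direction I would argue in two steps. First, to establish square-freeness, I suppose $p^2\mid n$ for some prime $p$ and seek a contradiction. Using a primitive root $g$ modulo $p^2$ (the unit group modulo $p^2$ being cyclic of order $p(p-1)$ for odd $p$, with an element of order $2$ surviving when $p=2$ and $4\mid n$), I use CRT to build an integer $b$ with $b\equiv g \pmod{p^2}$ and $b\equiv 1$ modulo the complementary part of $n$, so $\gcd(b,n)=1$. The Carmichael condition forces $b^{n-1}\equiv 1 \pmod{p^2}$, whence $p\mid n-1$; but $p\mid n$ then gives $p\mid\gcd(n,n-1)=1$, a contradiction. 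Second, for a prime $p\mid n$ I pick a primitive root $g$ modulo $p$ and again use CRT to produce $b$ coprime to $n$ with $b\equiv g \pmod p$; the Carmichael condition yields $b^{n-1}\equiv 1\pmod p$, which forces the order $p-1$ of $g$ to divide $n-1$, as desired.

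The main obstacle is the square-freeness step: the delicate point is constructing, via CRT, a unit $b$ that realizes an element whose order is divisible by $p$ modulo $p^2$ while remaining trivial on the complementary factor, and correctly handling the exceptional structure of the $2$-part, where $(\mathbb{Z}/2^k\mathbb{Z})^{\times}$ fails to be cyclic for $k\geq 3$. The remaining ingredients — Fermat's little theorem and the order computations — are routine.
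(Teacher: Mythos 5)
The paper states Korselt's criterion purely as background, with a citation to Korselt's 1899 paper, and gives no proof of it, so there is no internal argument to compare yours against; judged on its own merits, your proof is the standard one and is essentially correct. The ``if'' direction is exactly right. In the ``only if'' direction one point needs tightening: in the square-freeness step, the ``complementary part'' in your CRT construction must be the full $p$-free part $n/p^a$, where $p^a$ is the exact power of $p$ dividing $n$, not $n/p^2$. Otherwise, when $p^3\mid n$, the moduli $p^2$ and $n/p^2$ are not coprime and the system $b\equiv g\pmod{p^2}$, $b\equiv 1\pmod{n/p^2}$ is actually inconsistent (it forces $g\equiv 1\pmod p$). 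With moduli $p^2$ and $n/p^a$, which are coprime, the construction goes through: $b$ is a unit modulo $n$, and since $p^2\mid n$ the Carmichael condition gives $b^{n-1}\equiv 1\pmod{p^2}$, so the order $p(p-1)$ of $g$ modulo $p^2$ divides $n-1$, whence $p\mid\gcd(n,n-1)=1$, a contradiction. You can also sidestep primitive roots, and the non-cyclic $2$-part you worry about, entirely: for every prime $p$ the element $1+p$ has order exactly $p$ in $(\mathbb{Z}/p^2\mathbb{Z})^{\times}$ (for $p=2$ this is $3\equiv -1\pmod 4$, of order $2$), and an element whose order is divisible by $p$ is all the argument needs, since $b^{n-1}\equiv 1\pmod{p^2}$ then already forces $p\mid n-1$. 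Your final step, lifting a primitive root modulo $p$ by CRT to conclude $p-1\mid n-1$, is correct as written, and is cleanest if you run it after square-freeness is established, so that the moduli $p$ and $n/p$ are coprime.
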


Nevertheless, Korselt did not find any example and it was Robert Carmichael in 1910 (see \cite{CAR1}) who found the first and smallest of such numbers (561) and hence the name ``Carmichael number'' (which was introduced by Beeger in \cite{BEE}). In the same paper Carmichael presents a function $\lambda$ defined in the following way:
\begin{itemize}
\item $\lambda(2)=1$, $\lambda(4)=2$.
\item $\lambda(2^k)=2^{k-2}$ for every $k\geq 3$.
\item $\lambda(p^k)=\varphi(p^k)$ for every odd prime $p$.
\item $\lambda(p_1^{k_1}\cdots p_m^{k_m})=\textrm{lcm}\left(\lambda(p_1^{k_1}),\dots,\lambda(p_m^{k_m})\right)$.
\end{itemize}
With this function he gave the following characterization:

\begin{prop}[Carmichael, 1910]
A composite number $n$ is a Carmichael number if and only if $\lambda(n)$ divides $(n-1)$.
\end{prop}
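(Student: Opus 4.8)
The plan is to interpret Carmichael's function group-theoretically: I claim that $\lambda(n)$ is exactly the exponent of the unit group $(\mathbb{Z}/n\mathbb{Z})^{*}$, i.e. the least positive integer $e$ with $b^{e}\equiv 1 \pmod n$ for every $b$ coprime to $n$. Granting this claim, both implications are immediate. For the ``if'' direction, suppose $\lambda(n)\mid n-1$; then for every $b$ coprime to $n$ we have $b^{\lambda(n)}\equiv 1 \pmod n$ by definition of the exponent, and writing $n-1=\lambda(n)t$ yields $b^{n-1}=\left(b^{\lambda(n)}\right)^{t}\equiv 1 \pmod n$, so the composite number $n$ is a Carmichael number. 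For the ``only if'' direction, suppose $n$ is Carmichael; since $(\mathbb{Z}/n\mathbb{Z})^{*}$ is a finite abelian group it contains an element $g$ whose order equals its exponent $\lambda(n)$, and from $g^{n-1}\equiv 1 \pmod n$ together with $\mathrm{ord}(g)=\lambda(n)$ we conclude $\lambda(n)\mid n-1$.

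The entire argument therefore rests on the claim that $\lambda(n)$ is the exponent of $(\mathbb{Z}/n\mathbb{Z})^{*}$, attained by some element. To establish it I would apply the Chinese Remainder Theorem to write $(\mathbb{Z}/n\mathbb{Z})^{*}\cong \prod_{i}(\mathbb{Z}/p_i^{k_i}\mathbb{Z})^{*}$ for $n=\prod_i p_i^{k_i}$, and recall that the exponent of a finite direct product of abelian groups is the least common multiple of the exponents of the factors, matching precisely the $\mathrm{lcm}$-clause in the definition of $\lambda$. It then remains to check that the exponent of each factor $(\mathbb{Z}/p^{k}\mathbb{Z})^{*}$ equals $\lambda(p^{k})$: for odd $p$ this group is cyclic of order $\varphi(p^{k})$, so its exponent is $\varphi(p^{k})=\lambda(p^{k})$, while for $p=2$ one treats the three cases of the definition directly, the only nontrivial one being $(\mathbb{Z}/2^{k}\mathbb{Z})^{*}\cong \mathbb{Z}/2\mathbb{Z}\times \mathbb{Z}/2^{k-2}\mathbb{Z}$ for $k\geq 3$, whose exponent is $2^{k-2}=\lambda(2^{k})$. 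The existence of the element $g$ of order $\lambda(n)$ used above then follows from the standard fact that a finite abelian group has an element whose order equals its exponent (pick one of maximal order in each factor and recombine by CRT).

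The step I expect to be the main obstacle is the verification for the $2$-part, namely pinning down the structure of $(\mathbb{Z}/2^{k}\mathbb{Z})^{*}$ for $k\geq 3$ and confirming that its exponent is $2^{k-2}$ rather than $2^{k-1}$; this is exactly the subtlety that the piecewise definition of $\lambda$ is designed to encode. As an alternative that sidesteps the group theory, one could instead deduce the statement from Korselt's criterion: for a square-free odd $n=p_1\cdots p_m$ one has $\lambda(n)=\mathrm{lcm}(p_1-1,\dots,p_m-1)$, so $\lambda(n)\mid n-1$ is equivalent to $p_i-1\mid n-1$ for all $i$, and a short divisibility argument (if $p^2\mid n$ then $p\mid\lambda(n)$ forces $p\mid n-1$, impossible) shows that $\lambda(n)\mid n-1$ already forces $n$ to be odd and square-free, so that the two characterizations coincide.
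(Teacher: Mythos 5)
Your proposal is correct: identifying $\lambda(n)$ with the exponent of the unit group $(\mathbb{Z}/n\mathbb{Z})^{*}$, attained by some element since the group is finite abelian, is the standard proof of Carmichael's characterization, and both implications then follow exactly as you argue (the key $2$-adic verification $(\mathbb{Z}/2^{k}\mathbb{Z})^{*}\cong \mathbb{Z}/2\mathbb{Z}\times \mathbb{Z}/2^{k-2}\mathbb{Z}$ for $k\geq 3$ is right, as is the CRT/lcm reduction). For comparison, the paper states this proposition without proof, simply citing Carmichael's 1910 paper as background, so there is no in-paper argument to measure against; your write-up, either via the group-theoretic route or the alternative deduction from Korselt's criterion you sketch (which is also sound, since $\lambda(n)\mid n-1$ forces $n$ odd and square-free by the short divisibility argument you give), correctly supplies the omitted proof.
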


In 1994 Alford, Granville and Pomerance (see \cite{ALF}) answered in the affirmative the longstanding question whether there were infinitely many Carmichael numbers. From a more computational viewpoint, the paper \cite{GUN} gives an algorithm to construct large Carmichael numbers. In \cite{BAL} the distribution of certain types of Carmichael numbers is studied.

In this work we introduce the condition  $\varphi(n)| (n-1)^k$ (that we shall call \emph{$k$-Lehmer property} and the associated concept of $k$-Lehmer numbers. In the first section we give some properties of the sets $L_k$ (the set of numbers satisfying the $k$-Lehmer property) and $\displaystyle{L_{\infty}:=\bigcup_{k\geq 1} L_k}$, characterizing this latter set. In the second section we show that every Carmichael number is also a $k$-Lehmer number for some $k$. Finally, in the third section we use Chernick's formula to construct Camichael numbers in $L_k\setminus L_{k-1}$ and we give some related conjectures.

\section{A generalization of Lehmer's totient property}

Recall that a \emph{Lehmer number} is a composite integer $n$ such that $\varphi(n)|n-1$. Following this idea we present the definition below.

\begin{defi}
Given $k\in\mathbb{N}$, a $k$-Lehmer number is a composite integer $n$ such that $\varphi(n)|(n-1)^k$. If we denote by $L_k$ the set:
$$L_{k} := \{n \in \mathbb{N}\ |\  \varphi(n)|(n-1)^{k}\},$$
it is clear that $k$-Lehmer numbers are the composite elements of $L_k$.
\end{defi}

Once we have defined the family of sets $\{L_k\}_{k\geq 1}$ and since $L_k\subseteq L_{k+1}$ for every $k$, it makes sense to define a set $L_{\infty}$ in the following way:

$$L_{\infty} := \bigcup^{\infty}_{k=1}  L_{k}.$$
The set $L_{\infty}$ is easily characterized in the following proposition.

\begin{prop}\label{car}
$$L_{\infty}=\{n \in \mathbb{N}\ |\ \rm{rad}(\varphi(n))|n-1\}.$$
\end{prop}
\begin{proof}
Let $n\in L_{\infty}$. Then $n\in L_k$ for some $k\in\mathbb{N}$. Now, if $p$ is a prime dividing $\varphi(n)$, it follows that $p$ divides $(n-1)^k$ and, being prime, it also divides $n-1$. This proves that $\textrm{rad}(\varphi(n))|n-1$.

On the other hand, if $\textrm{rad}(\varphi(n)) | n-1$ it is clear that $\varphi(n) | (n-1)^k$ for some $k\in \mathbb{N}$. Thus $n\in L_k\subseteq L_{\infty}$ and the proof is complete. 
\end{proof}

Obviously, the composite elements of $L_{1}$ are precisely the Lehmer numbers and the Lehmer property asks whether $L_1$ contains composite numbers or not. Nevertheless, for all $k>1$,  $L_{k}$ always contains composite elements. For instance, the first few composite elements of $L_2$ are (sequence A173703 in OEIS):
$$\{561, 1105, 1729, 2465, 6601, 8481, 12801, 15841, 16705, 19345, 22321, 30889, 41041,\dots\}.$$

Observe that in the previous list of elements of $L_2$ there are no products of two distinct primes. We will now prove this fact, which is also true for Carmichael numbers. Observe that this property is no longer true for $L_3$ since, for instance, $15\in L_3$ and also the product of two Fermat primes lies in $L_{\infty}$.

In order to show that no product of two distinct odd primes lies in $L_2$ we will give a stronger result which determines when an integer of the form $n=pq$ (with $p\neq q$ odd primes) lies in a given $L_k$.

\begin{prop}
Let $p$ and $q$ be distinct odd primes and let $k\geq 2$. Put $p=2^ad\alpha+1$ and $q=2^bd\beta+1$ with $d$, $\alpha$, $\beta$ odd and $\gcd(\alpha,\beta)=1$. We can assume without loss of generality that $a\leq b$. Then $n=pq\in L_k$ if and only if $a+b\leq ka$ and $\alpha\beta | d^{k-2}$.
\end{prop}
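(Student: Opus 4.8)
The plan is to reduce the single divisibility $\varphi(n)\mid(n-1)^k$ to the family of valuation inequalities $v_r(\varphi(n))\le k\,v_r(n-1)$ ranging over all primes $r$, and to verify these separately at $r=2$ and at the odd primes. First I would record the two basic expansions. Since $p,q$ are distinct primes, $\varphi(n)=(p-1)(q-1)=2^{a+b}d^2\alpha\beta$, so $v_2(\varphi(n))=a+b$ and the odd part of $\varphi(n)$ is $d^2\alpha\beta$. On the other side, multiplying out $p=2^ad\alpha+1$ and $q=2^bd\beta+1$ gives $n-1=pq-1=2^a d\,M$, where $M=2^bd\alpha\beta+\alpha+2^{b-a}\beta$ and the hypothesis $a\le b$ guarantees that $2^{b-a}$ is an integer. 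Note also that $a,b\ge1$ because $p,q$ are odd.

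The crucial elementary observation, which I would establish next, is that $\gcd(M,\alpha)=\gcd(M,\beta)=1$. Indeed, reducing $M$ modulo $\alpha$ leaves $2^{b-a}\beta$, and reducing modulo $\beta$ leaves $\alpha$; since $\alpha,\beta$ are odd and coprime, both residues are units. This coprimality is exactly what makes the odd-part analysis collapse cleanly, and I expect it to be the technical heart of the argument.

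For the prime $2$, I would split on $a<b$ versus $a=b$. When $a<b$ the integer $M$ is odd (one odd summand $\alpha$ together with two even ones), so $v_2(n-1)=a$ and the $2$-adic inequality $v_2(\varphi(n))\le k\,v_2(n-1)$ reads exactly $a+b\le ka$. When $a=b$ one checks that $M=2^bd\alpha\beta+\alpha+\beta$ is even, so $v_2(n-1)\ge a+1$; then $v_2(\varphi(n))=2a$ while $k\,v_2(n-1)\ge 2(a+1)>2a$, so the inequality holds automatically for $k\ge2$, and the claimed condition $a+b=2a\le ka$ likewise holds automatically since $a\ge1$. Thus in every case the $2$-part of the divisibility is equivalent to $a+b\le ka$.

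For an odd prime $r$ I would argue pointwise. Since $\gcd(\alpha,\beta)=1$, at most one of $v_r(\alpha),v_r(\beta)$ is positive; and by the coprimality above, whenever $r\mid\alpha\beta$ we have $r\nmid M$, hence $v_r(M)=0$ and $v_r(n-1)=v_r(d)$. The inequality $v_r(\varphi(n))=2v_r(d)+v_r(\alpha)+v_r(\beta)\le k\,v_r(n-1)$ then reduces to $v_r(\alpha)+v_r(\beta)\le(k-2)v_r(d)$, which is precisely the $r$-part of $\alpha\beta\mid d^{k-2}$ (this correctly handles the case $r\mid d$ as well, where $d$ and $\alpha$ need not be coprime); and when $r\nmid\alpha\beta$ both sides are vacuous for $k\ge2$. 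Collecting these equivalences over all odd $r$ yields $\alpha\beta\mid d^{k-2}$, and combining it with the $2$-part gives the stated equivalence. The main obstacle is making the $v_2$ bookkeeping airtight in the boundary case $a=b$, where $M$ unexpectedly turns even; once the coprimality of $M$ with $\alpha$ and $\beta$ is secured, everything else is routine valuation arithmetic.
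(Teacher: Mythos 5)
Your proof is correct, but it takes a genuinely different route from the paper's. The paper never factors $n-1$: it expands $(n-1)^k=\left(2^{a+b}d^2\alpha\beta+2^ad\alpha+2^bd\beta\right)^k$ by the multinomial theorem, notes that every cross term is already divisible by $\varphi(n)=2^{a+b}d^2\alpha\beta$, and thus reduces membership in $L_k$ to the single divisibility $2^{a+b}d^2\alpha\beta\mid 2^{ka}d^k\left(\alpha^k+2^{k(b-a)}\beta^k\right)$; its key coprimality fact is $\gcd\left(\alpha\beta,\alpha^k+2^{k(b-a)}\beta^k\right)=1$ (with $\alpha^k+\beta^k$ when $a=b$), and the split $a<b$ versus $a=b$ enters through the parity of that parenthetical factor exactly as it enters in yours through the parity of $M$. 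You instead factor $n-1=2^adM$ once and compare valuations prime by prime, with $\gcd(M,\alpha\beta)=1$ as the key lemma --- the structural analogue of the paper's coprimality fact, but applied to $n-1$ itself rather than to the surviving term of the expanded $k$-th power. What your version buys: you avoid the multinomial bookkeeping that the paper compresses into ``it easily follows'' (one must verify that each mixed term is divisible by $\varphi(n)$, using $a,b\geq 1$ and $k\geq 2$), and the valuation format makes explicit a point the paper glosses over, namely that $d$ need not be coprime to $\alpha$ or $\beta$: your inequality $v_r(\alpha)+v_r(\beta)\leq (k-2)v_r(d)$ handles shared primes automatically. What the paper's version buys is brevity once the expansion is granted --- one clean divisibility instead of a family of inequalities --- and a form that directly feeds the later construction with $p_r=2^r\cdot 3+1$. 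Both arguments must negotiate the same two boundary phenomena, the auxiliary factor switching parity at $a=b$ and the condition $a+b\leq ka$ becoming automatic there, and you handle both correctly.
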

\begin{proof}
By definition $pq\in L_k$ if and only if $\varphi(pq)=(p-1)(q-1)=2^{a+b}d^2\alpha\beta$ divides $(pq-1)^k=\left(2^{a+b}d^2\alpha\beta+2^ad\alpha+2^bd\beta\right)^k$. If we expand the latter using the multinomial theorem it easily follows that $pq\in L_k$ if and only if $2^{a+b}d^2\alpha\beta$ divides $2^{ka}d^k\alpha^k+2^{kb}d^k\beta^k=2^{ka}d^k\left(\alpha^k+2^{k(b-a)}\beta^k\right)$.

Now, if $a\neq b$ observe that $\left(\alpha^k+2^{k(b-a)}\beta^k\right)$ is odd and, since $\gcd(\alpha,\beta)=1$, it follows that $\gcd(\alpha,\alpha^k+2^{k(b-a)}\beta^k)=\gcd(\beta,\alpha^k+2^{k(b-a)}\beta^k)=1$. This implies that $pq\in L_k$ if and only if $a+b\leq ka$ and $\alpha\beta$ divides $d^{k-2}$ as claimed.

If $a=b$ then $pq\in L_k$ if and only if $\alpha\beta$ divides $d^{k-2}\left(\alpha^k+\beta^k\right)$ and the result follows like in the previous case. Observe that in this case the condition $a+b\leq ka$ is vacuous since $k\geq 2$.
\end{proof}

\begin{cor}
If $p$ and $q$ are distinct odd primes, then $pq\not\in L_2$.
\end{cor}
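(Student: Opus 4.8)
The plan is to derive the corollary directly from the preceding proposition by specializing to the case $k=2$. I would begin by writing the two distinct odd primes in the prescribed form, $p=2^ad\alpha+1$ and $q=2^bd\beta+1$, with $d,\alpha,\beta$ odd and $\gcd(\alpha,\beta)=1$, and assume without loss of generality that $a\leq b$. By the proposition, $pq\in L_2$ holds if and only if two conditions are met simultaneously: $a+b\leq 2a$ and $\alpha\beta\mid d^{k-2}=d^0=1$.

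The key observation is that both conditions are extremely restrictive when $k=2$. First I would analyze the divisibility condition $\alpha\beta\mid 1$: since $\alpha$ and $\beta$ are positive integers, this forces $\alpha=\beta=1$. Next I would analyze the inequality $a+b\leq 2a$, which rearranges to $b\leq a$; combined with our standing assumption $a\leq b$, this forces $a=b$. So the only way to have $pq\in L_2$ is to have $\alpha=\beta=1$ and $a=b$.

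Finally I would show this scenario is impossible for \emph{distinct} primes. Under $\alpha=\beta=1$ and $a=b$, the two primes become $p=2^ad+1$ and $q=2^bd\cdot 1+1=2^ad+1$, so that $p=q$, contradicting the hypothesis that $p$ and $q$ are distinct. Hence no product of two distinct odd primes can lie in $L_2$, which is exactly the claim.

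I do not anticipate any genuine obstacle here, since the corollary is essentially an immediate specialization of the proposition; the only point requiring a moment's care is confirming that the condition $\alpha\beta\mid d^{k-2}$ really does reduce to $\alpha\beta\mid 1$ when $k=2$ (so that the power of $d$ vanishes), and then reading off that the two forced equalities $\alpha=\beta$ and $a=b$ collapse $p$ and $q$ into the same prime.
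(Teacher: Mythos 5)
Your proof is correct and follows essentially the same route as the paper: specializing the preceding proposition to $k=2$, deducing $a=b$ from $a+b\leq 2a$ together with $a\leq b$, and $\alpha=\beta=1$ from $\alpha\beta\mid d^{0}=1$, which forces $p=q$, a contradiction. No gaps to report.
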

\begin{proof}
By the previous proposition and using the same notation, $pq\in L_2$ if and only if $a+b\leq 2a$ and $\alpha\beta$ divides 1. Since $a\leq b$ the first condition implies that $a=b$ and the second condition implies that $\alpha=\beta=1$. Consequently $p=q$, a contradiction.
\end{proof}

It would be interesting to find an algorithm to construct elements in a given $L_k$. The easiest step in this direction, using similar ideas to those in Proposition 6, is given in the following result.

\begin{prop}
Let $p_r=2^r\cdot 3+1$. If $p_N$ and $p_M$ are primes and $M-N$ is odd, then $n=p_Np_M\in L_{K}$ for $K=\min\{k\ |\ kN\geq M+N\}$ and $n\not\in L_{K-1}$.
\end{prop}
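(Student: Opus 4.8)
The plan is to specialize the preceding proposition (the characterization of when a product of two distinct odd primes lies in $L_k$) to the primes $p_N$ and $p_M$. Writing $p_N = 2^N\cdot 3 + 1$ and $p_M = 2^M\cdot 3 + 1$, I first match them to the normal form $p = 2^a d\alpha + 1$, $q = 2^b d\beta + 1$ used there. The odd part of each of $p_N-1$ and $p_M-1$ is exactly $3$, and the requirement $\gcd(\alpha,\beta)=1$ forces this common factor into $d$, so the only admissible choice is $d=3$, $\alpha=\beta=1$, with $a=N$ and $b=M$. The hypothesis that $M-N$ is odd guarantees in particular $M\neq N$, so $p_N\neq p_M$ and we are genuinely in the case $a\neq b$; I take $N<M$ (the reading under which $N$ plays the role of the smaller exponent, as the stated formula for $K$ requires), which is the labelling $a\leq b$.

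With these values the two conditions of the preceding proposition become transparent. The divisibility condition $\alpha\beta\mid d^{k-2}$ reads $1\mid 3^{k-2}$, which holds for every $k\geq 2$ and therefore imposes nothing; the condition $a+b\leq ka$ reads $N+M\leq kN$, i.e. $kN\geq M+N$. Hence, for every $k\geq 2$, I obtain the clean equivalence $p_Np_M\in L_k\iff kN\geq M+N$, and the smallest such $k$ is exactly $K=\min\{k : kN\geq M+N\}$.

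It then remains to check that $K$ lies in the range where the preceding proposition applies and that the claim about $L_{K-1}$ is meaningful there. Since $N<M$, any $k$ with $kN\geq M+N>2N$ satisfies $k>2$, so $K\geq 3$ and in particular $K-1\geq 2$. Thus the preceding proposition is valid at both $k=K$ and $k=K-1$: at $k=K$ the inequality $KN\geq M+N$ holds by definition, giving $p_Np_M\in L_K$; at $k=K-1$ the minimality of $K$ forces $(K-1)N<M+N$, so the relevant condition fails and $p_Np_M\notin L_{K-1}$.

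I expect the only delicate point to be this last bookkeeping: one must confirm $K\geq 3$ (equivalently $K-1\geq 2$) before invoking the preceding proposition at $K-1$, since that proposition is stated only for exponents $k\geq 2$. The role of the parity hypothesis is worth isolating, since it is what makes the reduction to a single constraint possible: if one argues directly instead of citing the preceding proposition, the content of ``$M-N$ odd'' is that $2^{M-N}\equiv -1\pmod 3$, whence $9\mid n-1$, so the factor $3^2$ of $\varphi(p_Np_M)=2^{N+M}3^2$ divides $(n-1)^k$ for free and the $2$-adic inequality $N+M\leq kN$ remains as the single binding constraint — precisely the constraint that produces $K$.
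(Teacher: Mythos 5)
Your proof is correct and is exactly the route the paper intends: the paper states this proposition \emph{without} a written proof, remarking only that it follows ``using similar ideas to those in Proposition 6,'' and your direct specialization of that preceding proposition (taking $a=N$, $b=M$, $d=3$, $\alpha=\beta=1$ with $N<M$, plus the careful check that $K\geq 3$ so the hypothesis $k\geq 2$ holds at both $K$ and $K-1$) is precisely that argument carried out, including the bookkeeping the paper glosses over. One minor quibble with your closing remark: since $3\mid n-1$ unconditionally (as $n-1=3\cdot 2^N\left(3\cdot 2^M+2^{M-N}+1\right)$), the factor $3^2$ of $\varphi(n)$ divides $(n-1)^k$ for every $k\geq 2$ whatever the parity of $M-N$, so the hypothesis ``$M-N$ odd'' is never actually used beyond guaranteeing $M\neq N$ --- your own application of the preceding proposition, in which the condition $\alpha\beta\mid d^{k-2}$ is vacuous for all $k\geq 2$, already shows it is superfluous in that range, rather than being what ``makes the reduction to a single constraint possible.''
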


We will end this section with a table showing some values of the counting function for some $L_k$. If
$$C_k(X):=\sharp \{n \in L_k: x\leq X\},$$
we have the following data:
\begin{center}
\begin{tabular}{|c|c|c|c|c|c|c|c|c|}
  \hline
  n & 1 & 2 & 3 &4& 5 & 6 &7 &8 \\ \hline
  $C_2(10^n)$ & 5& 26 & 170 & 1236& 9613 & 78535 &664667 &5761621\\\hline
  $C_3(10^n)$ & 5& 29& 179& 1266& 9714 & 78841 & 665538 &5763967 \\\hline
  $C_4(10^n)$ & 5& 29& 182& 1281& 9784 & 79077 & 666390 & 5766571\\\hline
  $C_5(10^n)$ & 5& 30& 184& 1303& 9861& 79346 & 667282  &5769413\\\hline
$ C_\infty(10^n)$& 5& 30& 188& 1333& 10015& 80058 &670225 & 5780785 \\
  \hline
\end{tabular}
\end{center}

This table leads us to the following conjecture about the asymptotic behavior of $C_k(X)$.
\begin{con} 
For every $k>1$, the asymptotic behavior of $C_k$ does not depend on $k$ and, in particular:
$$C_k(x)\in \mathcal{O}(\frac{x}{\log\log x})$$
\end{con}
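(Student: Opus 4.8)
The plan is to reduce everything to the single set $L_\infty$. Since $L_k\subseteq L_\infty$ for every $k$, we have $C_k(x)\le C_\infty(x)$, so any upper bound for $C_\infty$ automatically bounds every $C_k$; this is also the natural entry point for the ``independence of $k$'' assertion, because the numerical data show that $C_2(x)$ and $C_\infty(x)$ are already extremely close, so all the $C_k$ with $k>1$ are squeezed between them and should share the asymptotic order of $C_\infty$.

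The key structural step is the inclusion $L_\infty\subseteq\{n\in\mathbb{N}\ |\ \gcd(n,\varphi(n))=1\}$. Indeed, if $n\in L_\infty$ and $q$ is any prime dividing $\varphi(n)$, then by Proposition~\ref{car} we have $q\mid\textrm{rad}(\varphi(n))\mid n-1$; since $\gcd(n,n-1)=1$ this forces $q\nmid n$, so no prime factor of $\varphi(n)$ divides $n$ and $\gcd(n,\varphi(n))=1$ (in particular $n$ is odd and square-free). At this point I would invoke Erd\H{o}s's classical theorem that the number of $n\le x$ with $\gcd(n,\varphi(n))=1$ is asymptotic to $e^{-\gamma}x/\log\log\log x$. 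This already yields $C_k(x)\le C_\infty(x)=\mathcal{O}(x/\log\log\log x)$, hence density zero for every $L_k$.

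This falls short of the conjectured $\mathcal{O}(x/\log\log x)$ by the factor $\log\log x/\log\log\log x$, so the remaining work is to exploit the \emph{full} strength of Proposition~\ref{car}, which says not merely that each prime $q\mid\varphi(n)$ avoids $n$ but that it divides $n-1$. Equivalently, for every odd prime $q$ one has the dichotomy: either $q\mid n-1$, or $n$ has no prime factor congruent to $1$ modulo $q$ (if $p\mid n$ and $p\equiv 1\pmod q$ then $q\mid p-1\mid\varphi(n)$, forcing $q\mid n-1$). I would run a two-case count with a parameter $y$: writing $T(n)$ for the set of odd primes $q\le y$ dividing $n-1$, if $|T(n)|$ is large then $n\equiv 1$ modulo a product of many small primes and the count is controlled by the Chinese Remainder Theorem, while if $|T(n)|$ is small then $n$ must be composed entirely of primes avoiding $\equiv 1\pmod q$ for almost all $q\le y$ --- a set of primes of relative density $\asymp 1/\log y$ by Mertens' theorem --- which confines $n$ to a thin set. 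Optimizing $y$ is what should produce the $x/\log\log x$ bound.

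The main obstacle is the rigorous execution of the second case: bounding, uniformly in $y$, the number of $n\le x$ built only from primes that avoid a positive proportion of the residue classes $1\pmod q$ with $q\le y$, and then balancing this against the congruence count of the first case so that the two contributions meet exactly at $x/\log\log x$. Establishing the \emph{matching} lower bound needed for the precise ``independence of $k$'' statement (that each $C_k$ with $k>1$ has one and the same asymptotic order) is harder still, since it requires producing $\gg x/\log\log x$ elements already in $L_2$; I would therefore present the upper bound as the realistic target and regard the exact common asymptotic as the genuinely open core of the conjecture.
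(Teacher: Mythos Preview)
This statement is labelled a \emph{Conjecture} in the paper; the authors offer no proof whatsoever, only the numerical table that precedes it. There is therefore no ``paper's own proof'' to compare your attempt against.

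What your proposal actually \emph{establishes} is the weaker bound $C_k(x)=\mathcal{O}(x/\log\log\log x)$, obtained by embedding $L_\infty$ into the set of cyclic numbers and quoting Erd\H{o}s's asymptotic for the latter. The paper records precisely this observation in the sentence introducing Lemma~1, so the rigorous content of your write-up does not go beyond what the authors already note. You correctly flag that this misses the conjectured $\mathcal{O}(x/\log\log x)$ by a factor $\log\log x/\log\log\log x$, and your two-case sketch with a parameter $y$ is a reasonable heuristic direction, but it is not executed: the sieve estimate in the ``small $T(n)$'' case is never made precise, no optimization of $y$ is carried out, and you offer no lower bound at all for the ``independence of $k$'' clause. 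Your final paragraph concedes all of this explicitly.

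In summary, neither the paper nor your proposal proves the conjecture; your partial progress coincides with the density-zero remark the paper already makes, and the sharper $\mathcal{O}(x/\log\log x)$ bound---let alone the matching lower bound for all $k>1$---remains genuinely open.
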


\section{Relation with Carmichael numbers}

This section will study the relation of $L_{\infty}$ with square-free integers and with Carmichael numbers. The characterization of $L_{\infty}$ given in Proposition \ref{car} allows us to present the following straightforward lemma which, in particular, implies that $L_\infty $ has zero asymptotic density (like the set of cyclic numbers, whose counting function is $\mathcal{O}(\frac{x}{\log\log\log x})$, see \cite{ERDOS}).

\begin{lem}\label{lem}
If $n\in L_{\infty}$, then $n$ is a cyclic number; i.e., $\gcd(n,\varphi(n))=1$ and consequently square-free.
\end{lem}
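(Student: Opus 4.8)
The plan is to lean entirely on the characterization of $L_\infty$ furnished by Proposition \ref{car}, so that the whole statement reduces to an elementary coprimality argument. Recall that for $n\in L_\infty$ we have $\mathrm{rad}(\varphi(n))\mid n-1$, where $\mathrm{rad}(\varphi(n))$ is the product of the distinct primes dividing $\varphi(n)$. The key observation is that $\mathrm{rad}(\varphi(n))$ carries \emph{every} prime divisor of $\varphi(n)$, so divisibility by the radical transfers to divisibility by each such prime.

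First I would argue that $\gcd(n,\varphi(n))=1$ by contradiction. Suppose some prime $p$ divides both $n$ and $\varphi(n)$. Since $p\mid\varphi(n)$, we have $p\mid\mathrm{rad}(\varphi(n))$, and Proposition \ref{car} then gives $p\mid n-1$. Combining $p\mid n$ with $p\mid n-1$ forces $p\mid\gcd(n,n-1)=1$, which is absurd. Hence no prime divides both $n$ and $\varphi(n)$, i.e. $\gcd(n,\varphi(n))=1$, which is exactly the definition of a cyclic number.

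It then remains to deduce square-freeness, which I would present as a consequence of the coprimality just established. If $n$ were not square-free, then $p^2\mid n$ for some prime $p$; writing $n=p^{a}m$ with $a\geq 2$ and $p\nmid m$, one has $\varphi(n)=p^{a-1}(p-1)\varphi(m)$, so $p\mid\varphi(n)$. But then $p$ divides both $n$ and $\varphi(n)$, contradicting $\gcd(n,\varphi(n))=1$. Therefore every prime exponent in $n$ equals $1$ and $n$ is square-free.

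Honestly there is no genuine obstacle here: once Proposition \ref{car} is in hand the argument is a two-line application of the fact that consecutive integers are coprime, together with the standard formula for $\varphi$ on prime powers. The only point requiring a modicum of care is the very first implication---that $p\mid\varphi(n)$ yields $p\mid\mathrm{rad}(\varphi(n))$---which is immediate from the definition of the radical but should be stated explicitly so the chain $p\mid\varphi(n)\Rightarrow p\mid\mathrm{rad}(\varphi(n))\Rightarrow p\mid n-1$ is transparent.
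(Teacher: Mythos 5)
Your proof is correct and is precisely the ``straightforward'' argument the paper has in mind: it states the lemma without proof as an immediate consequence of Proposition \ref{car}, and your chain $p\mid\varphi(n)\Rightarrow p\mid\mathrm{rad}(\varphi(n))\Rightarrow p\mid n-1$, combined with $\gcd(n,n-1)=1$ and the formula $\varphi(p^a m)=p^{a-1}(p-1)\varphi(m)$ for square-freeness, is exactly the intended reasoning. No gaps; nothing further is needed.
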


Recall that every Lehmer number (if any exists) must be a Carmichael number. The converse is clearly false but, nevertheless, we can see that every Carmichael number is a $k$-Lehmer number for some $k\in\mathbb{N}$.

\begin{prop}\label{infi}
If $n$ is a Carmichael number, then $n\in L_{\infty}$
\end{prop}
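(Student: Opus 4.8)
The plan is to reduce the statement to the characterization of $L_{\infty}$ already established in Proposition \ref{car}. That proposition tells us that $n\in L_{\infty}$ if and only if $\mathrm{rad}(\varphi(n))\mid n-1$, so it suffices to prove that every prime dividing $\varphi(n)$ also divides $n-1$ whenever $n$ is a Carmichael number. This reformulation is the crucial move: although $\varphi(n)$ itself will generally \emph{not} divide $n-1$ (Carmichael numbers need not be Lehmer numbers), passing to the radical discards all the troublesome prime multiplicities and leaves a condition that Korselt's criterion handles directly.

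First I would invoke Korselt's characterization (Proposition 1): since $n$ is a Carmichael number it is square-free, say $n=p_1p_2\cdots p_m$ with the $p_i$ distinct primes, and moreover $p_i-1\mid n-1$ for each $i$. Square-freeness gives the clean factorization $\varphi(n)=\prod_{i=1}^{m}(p_i-1)$, which is exactly what lets us control the prime divisors of $\varphi(n)$.

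Next I would take an arbitrary prime $q\mid\varphi(n)$. From the factorization above, $q$ must divide at least one factor $p_i-1$. Chaining this with the Korselt divisibility $p_i-1\mid n-1$ yields $q\mid n-1$. Since $q$ was an arbitrary prime divisor of $\varphi(n)$, every such prime divides $n-1$, and therefore $\mathrm{rad}(\varphi(n))\mid n-1$.

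Finally I would close the argument by applying Proposition \ref{car} in the reverse direction: the divisibility $\mathrm{rad}(\varphi(n))\mid n-1$ places $n$ in $L_{\infty}$, as desired. I do not anticipate a genuine obstacle here; the only point requiring care is that the conclusion is about the radical rather than about $\varphi(n)$ directly, so the proof must not overclaim $\varphi(n)\mid n-1$. Everything hinges on combining square-freeness (to factor $\varphi(n)$) with the per-prime divisibility from Korselt, and both ingredients are supplied by the hypotheses.
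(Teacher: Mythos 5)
Your proposal is correct and follows essentially the same route as the paper: both arguments use Korselt's criterion to factor $\varphi(n)=\prod_{i}(p_i-1)$, show that each prime divisor of $\varphi(n)$ divides some $p_i-1$ and hence $n-1$, and then conclude via the characterization of $L_{\infty}$ in Proposition \ref{car}. No gaps; your explicit caution about not overclaiming $\varphi(n)\mid n-1$ is exactly the right point of care.
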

\begin{proof}
Let $n$ be a Carmichael number. By Korselt's criterion $n=p_1\cdots p_m$ and $p_i-1$ divides $n-1$ for every $i\in\{1,\dots,m\}$. We have that $\varphi(n)=(p_1-1)\cdots (p_m-1)$ and we can put $rad(\varphi(n))=q_1\cdots q_r$ with $q_j$ distinct primes. Now let $j\in\{1,\dots,r\}$, since $q_j$ divides $\varphi(n)$ it follows that $q_j$ divides $p_i-1$ for some $i\in\{1,\dots,m\}$ and also that $q_j$ divides $n-1$. This implies that $rad(\varphi(n))$ divides $n-1$ and the result follows.
\end{proof}

The two previous results lead to a characterization of Carmichael numbers  which slightly modifies Korselt's criterion. Namely, we have the following result.

\begin{teor}
A composite number $n$ is a Carmichael number if and only if  $rad(\varphi(n))$ divides $n-1$ and $p-1$ divides $n-1$ for every $p$ prime divisor of $n$.
\end{teor}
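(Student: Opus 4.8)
The plan is to derive this characterization directly from the results already established in the paper, viewing the two stated conditions together as a reformulation of Korselt's criterion in which the square-freeness hypothesis has been absorbed into the radical condition. Since both directions are implications between a ``Carmichael'' hypothesis and a conjunction of two divisibility conditions, I would treat the two implications separately, feeding in Propositions \ref{car} and \ref{infi}, Lemma \ref{lem}, and Korselt's criterion as black boxes.

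For the forward implication I would assume $n$ is a Carmichael number. Korselt's criterion immediately yields that $p-1$ divides $n-1$ for every prime $p$ dividing $n$, which is precisely the second condition. For the first condition I would invoke Proposition \ref{infi} to conclude $n\in L_{\infty}$, and then Proposition \ref{car} to translate this membership into the divisibility $rad(\varphi(n))\mid n-1$. Both required conditions thus hold.

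For the converse I would start from the hypothesis $rad(\varphi(n))\mid n-1$, which by Proposition \ref{car} is exactly the statement $n\in L_{\infty}$. Lemma \ref{lem} then guarantees that $n$ is square-free. Since we also assume that $p-1$ divides $n-1$ for every prime divisor $p$ of $n$, the composite number $n$ satisfies all the hypotheses of Korselt's criterion and is therefore a Carmichael number.

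I expect no serious computational obstacle: the entire content of the theorem rests on the observation, supplied by Lemma \ref{lem}, that the single condition $rad(\varphi(n))\mid n-1$ already forces $n$ to be square-free. The only point requiring care is to confirm that this hypothesis genuinely substitutes for the explicit square-freeness assumption in Korselt's criterion, so that the two conditions in the statement are neither redundant nor jointly too weak; once Lemma \ref{lem} is in hand, the equivalence follows as a direct synthesis of Propositions \ref{car} and \ref{infi} with Korselt's criterion.
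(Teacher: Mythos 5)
Your proposal is correct and follows essentially the same route as the paper: Korselt's criterion plus Proposition \ref{infi} for the forward direction, and Lemma \ref{lem} to recover square-freeness before reapplying Korselt for the converse. Your explicit detour through Proposition \ref{car} to identify the hypothesis $\mathrm{rad}(\varphi(n))\mid n-1$ with membership in $L_{\infty}$ merely makes precise a step the paper leaves implicit, so the two arguments coincide in substance.
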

\begin{proof}
We have already seen in Proposition \ref{infi} that if $n$ is a Carmichael number, then $rad(\varphi(n))$ divides $n-1$ and, by Korselt's criterion $p-1$ divides $n-1$ for every $p$ prime divisor of $n$.

Conversely, if $rad(\varphi(n))$ divides $n-1$ then by Lemma \ref{lem} we have that $n$ is square-free so it is enough to apply Korselt's criterion again.
\end{proof}

The set $L_\infty$ not only contains every Carmichael numbers (which are absolute pseudoprimes) but all the elements of $L_{\infty}$ are Fermat pseudoprimes to some base $b$ with $1<b<n-1$. In fact, we have:

\begin{prop} 
Let $n\in L_\infty$ be a composite integer and let $b$ be an integer such that $b\equiv 2^{\frac{\varphi(n)}{rad(\varphi(n))}}$ (mod $n$). Then $n$ is a Fermat pseudoprime to base $b$.
\end{prop}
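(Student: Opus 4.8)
The plan is to reduce the statement to the single congruence $b^{n-1}\equiv 1\pmod{n}$, which—together with $\gcd(b,n)=1$ and the fact that $n$ is composite—is exactly what it means for $n$ to be a Fermat pseudoprime to base $b$. Write $r:=\operatorname{rad}(\varphi(n))$. Since $n\in L_\infty$, Proposition \ref{car} gives $r\mid n-1$, so we may write $n-1=rt$ for some positive integer $t$. Moreover, the radical of any integer divides that integer, so $r\mid\varphi(n)$; hence the exponent $\varphi(n)/r$ occurring in the definition of $b$ is a genuine nonnegative integer and the expression $2^{\varphi(n)/r}$ makes sense.

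First I would check that $n$ is odd, so that Euler's theorem applies to the base $2$. By Lemma \ref{lem}, $n$ is a cyclic number, i.e. $\gcd(n,\varphi(n))=1$. If $n$ were even then, being composite and square-free, it would factor as $n=2p_1\cdots p_s$ with $s\geq 1$ distinct odd primes, whence $\varphi(n)=\prod_{i}(p_i-1)$ is even and shares the factor $2$ with $n$, contradicting cyclicity. Thus $n$ is odd, so $\gcd(2,n)=1$, and consequently $\gcd(b,n)=\gcd\bigl(2^{\varphi(n)/r},n\bigr)=1$, which secures the coprimality requirement.

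The decisive computation is then immediate. Using $b\equiv 2^{\varphi(n)/r}\pmod{n}$ and $n-1=rt$, together with Euler's theorem $2^{\varphi(n)}\equiv 1\pmod{n}$, we obtain
\[
b^{n-1}\equiv 2^{(n-1)\varphi(n)/r}=2^{\,rt\cdot\varphi(n)/r}=2^{\,t\varphi(n)}=\bigl(2^{\varphi(n)}\bigr)^{t}\equiv 1\pmod{n}.
\]
Since $n$ is composite, this is precisely the assertion that $n$ is a Fermat pseudoprime to base $b$.

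I do not expect a genuine obstacle here; the proposition is a short consequence of the two earlier results, and the only points demanding care are the integrality of $\varphi(n)/r$ and the oddness of $n$, both of which are settled above. If one wanted the stronger conclusion hinted at in the surrounding text—that the base can be taken \emph{nontrivial}, with $1<b<n-1$—then the additional task would be to rule out $b\equiv\pm 1\pmod{n}$. That refinement does not follow from the congruence alone and would require a separate argument, but it is not part of the stated proposition.
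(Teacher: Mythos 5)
Your proof is correct and follows essentially the same route as the paper: both reduce the claim to the congruence $b^{n-1}\equiv 2^{\varphi(n)\frac{n-1}{\mathrm{rad}(\varphi(n))}}\equiv 1 \pmod{n}$ via Euler's theorem, using that $\mathrm{rad}(\varphi(n))\mid n-1$ and that $n$ is odd. The only difference is that you spell out details the paper leaves implicit (the oddness of $n$ via Lemma \ref{lem}, the integrality of $\varphi(n)/\mathrm{rad}(\varphi(n))$, and $\gcd(b,n)=1$), and your closing remark is apt: the nontriviality claim $1<b<n-1$ made in the surrounding text is indeed not part of, nor proved by, the stated proposition.
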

\begin{proof}
Since $n\in L_{\infty}$, it is odd and $\textrm{rad}(\varphi(n))$ divides $n-1$. Thus:
$$b^{n-1}\equiv 2^{\frac{\varphi(n)(n-1)}{\rm{rad}(\varphi(n))}}=2^{\varphi(n)\frac{n-1}{\rm{rad}(\varphi(n))}}\equiv 1\ \textrm{(mod $n$)}.$$
\end{proof}

\section{Carmichael numbers in $L_k \backslash L_{k-1} $. some conjectures.}

Recall the list of elements from $L_2$ given in the previous section:
$$L_{2}=\{\textbf{561}, \textbf{1105}, \textbf{1729}, \textbf{2465}, \textbf{6601}, 8481, 12801, \textbf{15841}, 16705, 19345, 22321, 30889, 41041\dots\}.$$
Here, numbers in boldface are Carmichael numbers. Observe that not every Carmichael number lies in $L_2$, the smallest absent one being 2821. Although 2821 doe not lie in $L_2$ in is easily seen that 2821 lies in $L_3$.

It would be interesting to study the way in that Carmichael numbers are distributed among the sets $L_k$. In this section we will present a first result in this direction together with some conjectures.

Recall Chernick's formula (see \cite{CHE}):
$$U_k(m)=(6m + 1)(12m + 1)\prod_{i=1}^{k-2}(9\cdot 2^im+1).$$ 
$U_k(m)$ is a Carmichael number provided all the factors are prime and $2^{k-4}$ divides $m$. Whether this formula produces an infinity quantity of Carmichael numbers is still not known, but we will see that it behaves quite nicely with respect to our sets $L_k$.

\begin{prop}\label{cher}
Let $k>2$. If $(6m + 1)$, $(12m + 1)$ and $(9\cdot 2^i m+1)$ for $i=1,\dots,k-2$ are primes and $m\equiv 0$ (mod $2^{k-4}$) is not a power of 2, then $U_k(m) \in  L_{k}\setminus L_{k-1}$.
\end{prop}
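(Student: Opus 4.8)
The plan is to argue prime by prime, comparing the exponent of each prime in $\varphi(n)$ with $k$ (respectively $k-1$) times its exponent in $n-1$; throughout, $v_\ell$ denotes the $\ell$-adic valuation. Label the $k$ prime factors $p_0=6m+1$, $p_1=12m+1$ and $q_i=9\cdot 2^im+1$ for $1\le i\le k-2$; since they are distinct primes, $n$ is squarefree and $\varphi(n)=(p_0-1)(p_1-1)\prod_i(q_i-1)$. Multiplying the coefficients $6,12,9\cdot 2,\dots,9\cdot 2^{k-2}$ yields
$$\varphi(n)=2^{\,3+\binom{k-1}{2}}\,3^{\,2k-2}\,m^{k},$$
so the primes dividing $\varphi(n)$ are exactly $2$, $3$ and the primes dividing $m$. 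The hypothesis $2^{k-4}\mid m$ enters only to invoke Chernick's theorem, which makes $n$ a Carmichael number; Korselt's criterion then gives $p-1\mid n-1$ for every prime factor $p$, hence the lower bounds $v_\ell(n-1)\ge\max_p v_\ell(p-1)$ that will drive the membership $n\in L_k$.

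To prove $n\in L_k$ I would verify $v_\ell(\varphi(n))\le k\,v_\ell(n-1)$ at $\ell=2,3$ and at each $\ell\mid m$. Writing $m=2^s3^uw$ with $\gcd(w,6)=1$, Korselt gives $v_2(n-1)\ge\max(2,k-2)+s$, $v_3(n-1)\ge 2+u$ and $v_\ell(n-1)\ge v_\ell(m)$ for $\ell\mid w$, while the displayed factorization gives $v_2(\varphi(n))=3+\binom{k-1}{2}+ks$, $v_3(\varphi(n))=2k-2+ku$ and $v_\ell(\varphi(n))=k\,v_\ell(m)$. In each inequality the common term $k\,v_\ell(m)$ cancels, leaving $2k-2\le 2k$ at $\ell=3$, $0\le 0$ at $\ell\mid w$, and the single nontrivial inequality $3+\binom{k-1}{2}\le k\max(2,k-2)$ at $\ell=2$. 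I expect this prime-$2$ estimate to be the main obstacle, together with the bookkeeping point that the largest power of $2$ in $n-1$ is supplied by $p_1-1$ when $k=3$ but by $q_{k-2}-1$ when $k\ge 4$; the estimate is then confirmed by treating $k=3$ and $k\ge 4$ separately.

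For $n\notin L_{k-1}$ it suffices to exhibit a single prime $\ell$ with $v_\ell(\varphi(n))>(k-1)\,v_\ell(n-1)$, and this is precisely where the hypothesis that $m$ is not a power of $2$ is used: it forces an odd prime $\ell\mid m$. The key observation is that every $p_j-1$ is divisible by $3m$, so each product of two or more of the $p_j-1$ is divisible by $9m^2$, whence
$$n-1=\sum_j(p_j-1)+9m^2S=9\cdot 2^{k-1}m+9m^2S=9m\bigl(2^{k-1}+mS\bigr)$$
for some integer $S$, using $\sum_j(p_j-1)=9\cdot 2^{k-1}m$. Since $2^{k-1}+mS$ is coprime both to $3$ and to every odd prime dividing $m$, this gives the exact values $v_\ell(n-1)=v_\ell(m)$ for $\ell\ge 5$ and $v_3(n-1)=2+v_3(m)$ when $3\mid m$. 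Substituting into the exponents of $\varphi(n)$ yields, in either case, $v_\ell(\varphi(n))-(k-1)v_\ell(n-1)=v_\ell(m)>0$, which is exactly the strict failure of the $L_{k-1}$ condition. Combining this with the previous paragraph establishes $n\in L_k\setminus L_{k-1}$.
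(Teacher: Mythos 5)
Your proof is correct, but it takes a genuinely different route from the paper's in the membership half. The paper never uses that $U_k(m)$ is a Carmichael number: it establishes by induction the identity $U_k(m)-1=2^23^2m\bigl(2^{k-3}+\sum_{i=1}^{k-1}a_im^i\bigr)$, compares it directly with $\varphi(U_k(m))=2^{(k^2-3k+8)/2}3^{2k-2}m^k$, and spends the hypothesis $2^{k-4}\mid m$ on boosting the $2$-adic valuation of $U_k(m)-1$, splitting into the cases $3\le k\le 5$ and $k\ge 6$. You instead spend that hypothesis on invoking Chernick's theorem, so that Korselt's criterion gives $p-1\mid n-1$ for every prime factor; in particular $q_{k-2}-1=9\cdot2^{k-2}m$ yields $v_2(n-1)\ge k-2+v_2(m)$, numerically the same bound the paper extracts directly, but your single inequality $3+\binom{k-1}{2}\le k\max(2,k-2)$ then handles all $k\ge 3$ uniformly, avoiding the paper's case split (whose $k\ge6$ chain, as printed, is even slightly off at $k=6$: $2^{13}\nmid 2^{12}$, and one must keep the factor $2^{2k}$ coming from $(2^23^2m)^k$ for the display to go through). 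For non-membership both arguments hinge on the same idea — an odd prime divisor of $m$ cannot divide the cofactor of $m$ in $n-1$ — but your derivation $n-1=9m(2^{k-1}+mS)$ from the observation that $3m$ divides every $p_j-1$, together with $\sum_j(p_j-1)=9\cdot2^{k-1}m$, replaces the paper's inductive polynomial identity and yields exact valuations ($v_\ell(n-1)=v_\ell(m)$ for $\ell\ge5$ dividing $m$, and $v_3(n-1)=2+v_3(m)$ when $3\mid m$) rather than the paper's divisibility contradiction via the odd part $m'$. One phrasing nit: $2^{k-1}+mS$ need not be coprime to $3$ in general — that is guaranteed only when $3\mid m$ — but since you invoke coprimality to $3$ only in that case, nothing breaks. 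The trade-off is clear: your argument is shorter and more uniform but imports Chernick's theorem (so it is not self-contained, though the paper states that theorem anyway), while the paper's computation is elementary and self-contained at the cost of messier bookkeeping at the prime $2$.
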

\begin{proof}
It can be easily seen by induction (we give no details) that $U_k(n)-1=2^23^2m\left(2^{k-3}+\sum_{i=1}^{k-1} a_im^i\right)$. On the other hand we have that $\varphi\left(U_k(m)\right)=2^{\frac{k^2-3k+8}{2}}3^{2k-2}m^k$.

Let us see that $U_k(m)\in L_k$. To do so we study two cases:
\begin{itemize}
\item Case 1: $3\leq k\leq 5$.

In this case $\frac{k^2-3k+8}{2}<2k$ and, consequently:
$$\varphi\left(U_k(m)\right)=2^{\frac{k^2-3k+8}{2}}3^{2k-2}m^k\ \big |\ (2^23^2m)^k\ \big |\ (U_k(m)-1)^k.$$
\item Case 2: $k\geq 6$.

Since $2^{k-4}$ divides $m$ we have that $2^{k-4}$ divides $2^{k-3}+\sum_{i=1}^{k-1} a_im^i$. Consequently, since $k(k-4)<\frac{k^2-3k+8}{2}$ in this case, we get that:
$$\varphi\left(U_k(m)\right)=2^{\frac{k^2-3k+8}{2}}3^{2k-2}m^k\ \big |\ 2^{k(k-4)}3^{2k-2}m^k\ \big |\ (U_k(m)-1)^k.$$
\end{itemize}

Now, we will see that $U_k(m)\not\in L_{k-1}$. Since $U_k(m)-1)^{k-1}=2^{2k-2}3^{2k-2}(2^{k-3}+\left(\sum_{i=1}^{k-1} a_im^i\right)^{k-1}$, it follows that $U_k(m)\in L_{k-1}$ if and only if $2^{\frac{(k-3)(k-4)}{2}}m$ divides $\left(\sum_{i=1}^{k-1} a_im^i\right)^{k-1}$. If we put $m=2^hm'$ with $m'$ odd this latter condition implies that $m'|2^{k-3}{k-1}$ which is clearly a contradiction because $m$ is not a power of 2. This ends the proof.
\end{proof}

This result motivates the following conjecture.

\begin{con}
For every $k\in \mathbb{N}$, $L_{k+1}\setminus L_k$ contains infinitely many Carmichael numbers.
\end{con}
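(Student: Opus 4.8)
The plan is to realize the required Carmichael numbers as values of Chernick's polynomial, thereby reducing the whole statement to a simultaneous primality assertion of prime $k$-tuples type. Fix $k\geq 2$ and apply Proposition \ref{cher} with index $k+1$ in place of $k$: whenever $m\equiv 0\pmod{2^{k-3}}$ is not a power of $2$ and the $k+1$ linear forms
$$6m+1,\quad 12m+1,\quad 9\cdot 2^i m+1\quad(i=1,\dots,k-1)$$
are simultaneously prime, the integer $U_{k+1}(m)$ is a Carmichael number --- by Chernick's criterion, since all factors are prime and $2^{(k+1)-4}=2^{k-3}$ divides $m$ --- lying in $L_{k+1}\setminus L_k$. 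Hence the conjecture for a fixed $k\geq 2$ would follow immediately from the existence of infinitely many integers $m$ for which these forms are all prime.

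The crux is therefore the simultaneous primality of this tuple of linear forms, which I would attack via the prime $k$-tuples conjecture. Here admissibility is immediate: each form has constant term $1$, so for every prime $p$ the residue class $m\equiv 0\pmod p$ makes all forms congruent to $1$ modulo $p$, whence the tuple never covers a complete system of residues and is admissible. Absorbing the congruence constraint by writing $m=2^{k-3}m'$ leaves all constant terms equal to $1$, so admissibility of the resulting family in $m'$ is equally automatic. Granting the Hardy--Littlewood/Dickson prime $k$-tuples conjecture, there would then be infinitely many $m'$, hence infinitely many admissible $m$, making every form prime; discarding the density-zero set of powers of $2$ still leaves infinitely many, which completes the case $k\geq 2$. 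The boundary case $k=1$ does not fit this scheme, since $U_3(m)$ lies in $L_3\setminus L_2$ for every $m>1$ (only $U_3(1)=1729$ lands in $L_2$); it would require a separate parametric family of Carmichael numbers whose $2$-adic and $3$-adic data place them in $L_2\setminus L_1$, and I would search for one among Chernick-type products with a modified leading structure.

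The main obstacle is precisely what keeps the statement a conjecture: the prime $k$-tuples conjecture is open, and indeed it is not even known whether Chernick's formula produces infinitely many Carmichael numbers for a single fixed number of factors, let alone uniformly in $k$. An unconditional proof would demand a genuinely new ingredient --- either a sieve capable of forcing $k+1$ linear forms to be simultaneously prime (well beyond current reach once $k$ grows), or an entirely different construction of Carmichael numbers with prescribed valuations of $n-1$ relative to those of $\varphi(n)$, since by Proposition \ref{cher} it is exactly this valuation gap that governs membership in $L_{k+1}\setminus L_k$.
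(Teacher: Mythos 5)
The statement you are addressing is not a theorem of the paper but one of its concluding conjectures: the paper offers no proof, and your proposal does not supply one either. Your reduction for $k\geq 2$ is sound and is in fact exactly the heuristic the authors themselves use to motivate the conjecture --- Proposition \ref{cher} applied with index $k+1$ places $U_{k+1}(m)$ in $L_{k+1}\setminus L_k$ whenever $2^{k-3}\mid m$, $m$ is not a power of $2$, and the $k+1$ forms $6m+1$, $12m+1$, $9\cdot 2^im+1$ ($i=1,\dots,k-1$) are simultaneously prime; your admissibility check (all constant terms equal to $1$, so $m\equiv 0\pmod p$ avoids every local obstruction, before and after the substitution $m=2^{k-3}m'$) is correct. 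But the whole argument then rests on the Dickson/Hardy--Littlewood prime $k$-tuples conjecture, which is open even for two linear forms (twin primes), and the paper explicitly notes that it is not known whether Chernick's formula yields infinitely many Carmichael numbers for even a single fixed number of factors. So the genuine gap is the one you yourself identify in your final paragraph: conditional on an open conjecture, the statement follows; unconditionally, nothing here proves it, which is precisely why it appears in the paper as a conjecture rather than a proposition.

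Two further points. First, your case $k=1$ is not handled even conditionally: Proposition \ref{cher} requires $k>2$, so Chernick products with three factors land in $L_3\setminus L_2$ (away from $m$ a power of $2$), and you would need a different parametric family of Carmichael numbers in $L_2\setminus L_1$; none is exhibited, and note that membership in $L_2\setminus L_1$ also tacitly requires avoiding Lehmer numbers, i.e., composites in $L_1$ --- conjecturally none exist, but this too is open. Second, your step ``discarding the density-zero set of powers of $2$ still leaves infinitely many'' needs the quantitative Hardy--Littlewood count (of order $x/\log^{k+1}x$, which dominates the $O(\log x)$ powers of $2$), not merely the qualitative infinitude from Dickson's conjecture, since an infinite set can in principle be contained in a density-zero set. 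As a conditional reduction your write-up is essentially the paper's own motivation made explicit; as a proof of the statement it has an unavoidable gap, namely the simultaneous primality of $k+1$ linear forms infinitely often.
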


Now, given $k\in\mathbb{N}$, let us denote by $\alpha(k)$ the smallest Carmichael number $n$ such that $n \not \in L_k$:
$$\alpha(k)=\min\{n\ |\ \textrm{$n$ is a Carmichael number,}\ n \not\in L_k\}.$$
The following table presents the first few elements of this sequence (A207080 in OEIS):
$$\begin{array}{ccc}
  k & \alpha(k)  &\textrm{Prime Factors}\\
  1 & 561 &3 \\
  2 & 2821 &3 \\
  3 & 838201 &4\\
  4 & 41471521&5  \\
  5 & 45496270561&6 \\
   6 & 776388344641&7 \\
      7 & 344361421401361 &8\\
     8 & 375097930710820681&9 \\
      9 & 330019822807208371201&10 \\
\end{array}$$

These observations motivate the following conjectures which close the paper:

\begin{con}
For every $k\in \mathbb{N}$, $\alpha(k)\in L_{k+1}$.
\end{con}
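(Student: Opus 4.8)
The plan is to restate the claim in terms of the index
$$\kappa(n):=\min\{k\in\mathbb{N}\ |\ n\in L_k\},$$
which is well defined for every Carmichael number by Proposition \ref{infi}. Since $\alpha(k)$ is by definition a Carmichael number with $\alpha(k)\notin L_k$, we automatically have $\kappa(\alpha(k))\geq k+1$, and the assertion $\alpha(k)\in L_{k+1}$ is exactly the statement $\kappa(\alpha(k))\leq k+1$. So everything reduces to bounding $\kappa$ from above for the specific minimizer $\alpha(k)$.

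The first and cleanest step would be to prove a general upper bound for $\kappa$ on the whole class of Carmichael numbers: if $n$ is a Carmichael number with $\omega(n)=m$ distinct prime factors, then $\kappa(n)\leq m$, i.e.\ $n\in L_m$. This follows from Korselt's criterion by a valuation count. Writing $n=p_1\cdots p_m$, for every prime $q$ one has $v_q(\varphi(n))=\sum_{i=1}^m v_q(p_i-1)$, and since $p_i-1$ divides $n-1$ each summand satisfies $v_q(p_i-1)\leq v_q(n-1)$; hence
$$v_q(\varphi(n))=\sum_{i=1}^m v_q(p_i-1)\leq m\,v_q(n-1)$$
for all $q$, which is precisely the divisibility $\varphi(n)\mid (n-1)^m$, i.e.\ $n\in L_m$. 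Thus $\kappa(n)\leq m=\omega(n)$ for every Carmichael number.

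With this lemma in hand the conjecture reduces to a purely combinatorial statement about $\alpha(k)$: because $\alpha(k)$ is Carmichael and not in $L_k$ we get $\omega(\alpha(k))\geq\kappa(\alpha(k))\geq k+1$, so it would suffice to show $\omega(\alpha(k))\leq k+1$; then $\kappa(\alpha(k))\leq\omega(\alpha(k))=k+1$ and we are done. The data in the table are consistent with this: for every $k\geq 2$ the minimizer has exactly $k+1$ prime factors, and only the case $k=1$ is exceptional (there $\alpha(1)=561$ has three prime factors yet lies in $L_2$). I would therefore dispose of $k=1$ by the direct verification $561\in L_2$ (it appears in the displayed list), and for $k\geq 2$ concentrate entirely on proving that the smallest Carmichael number outside $L_k$ cannot have $k+2$ or more prime factors.

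The hard part is exactly this last inequality $\omega(\alpha(k))\leq k+1$, and it is where the real difficulty of the conjecture lives. The natural strategy is a size comparison: exhibit a reasonably small Carmichael number with $k+1$ prime factors and $\kappa$ equal to $k+1$ — Proposition \ref{cher} produces such numbers of Chernick type in $L_{k+1}\setminus L_k$ — and then show that every Carmichael number with at least $k+2$ prime factors must exceed it. The obstacle is twofold: first, controlling the \emph{least} Carmichael number with a prescribed number of prime factors (and a prescribed value of $\kappa$) requires sharp lower bounds that are not currently available; and second, the existence of small Chernick numbers rests on simultaneous primality of the factors, a question tied to open problems. For this reason I expect the valuation lemma to be fully provable and to carry the structural content of the result, while the minimality claim $\omega(\alpha(k))\leq k+1$ remains the genuinely open core, explaining why the statement is offered as a conjecture rather than a theorem.
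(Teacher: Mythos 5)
The statement you were asked to prove is, in the paper, exactly that: a conjecture, stated without proof on the evidence of the table of values of $\alpha(k)$. So there is no proof in the paper to compare against, and a complete blind proof should not have been expected. Your attempt correctly senses this and does not claim to close the problem; what it does prove is correct and worth isolating. Since a Carmichael number $n=p_1\cdots p_m$ is squarefree with each $p_i-1$ dividing $n-1$ (Korselt), $\varphi(n)=(p_1-1)\cdots(p_m-1)$ is a product of $m$ divisors of $n-1$ and hence divides $(n-1)^m$; your valuation computation $v_q(\varphi(n))=\sum_i v_q(p_i-1)\leq m\,v_q(n-1)$ says the same thing, correctly, though the direct divisibility argument makes the valuations unnecessary. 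Thus $\kappa(n)\leq\omega(n)$ for every Carmichael number, and combined with $\kappa(\alpha(k))\geq k+1$ (immediate from $\alpha(k)\notin L_k$ and the nesting $L_j\subseteq L_k$) this gives the unconditional lower bound $\omega(\alpha(k))\geq k+1$. That inequality is the provable half of the paper's final conjecture on the number of prime factors of $\alpha(k)$, and it reduces the present statement to the single upper bound $\omega(\alpha(k))\leq k+1$, after the direct check $561\in L_2$ for $k=1$, which the paper's list confirms. This reduction, linking the two closing conjectures of the paper, goes beyond anything the paper itself establishes.

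The genuine gap is exactly where you locate it: the minimality claim $\omega(\alpha(k))\leq k+1$. Nothing available delivers this. The Chernick-type witnesses of Proposition \ref{cher} do lie in $L_{k+1}\setminus L_k$, but their existence for each $k$ rests on unproved simultaneous-primality hypotheses, and even granting those one would still need lower bounds on the least Carmichael number with at least $k+2$ prime factors, which are out of reach; it is not even known that Chernick's formula produces infinitely many Carmichael numbers. So your attempt is an honest and correct partial result --- a new proven inequality plus a clean reduction of one open conjecture to (half of) another --- rather than a proof, and your diagnosis of where the difficulty lives matches the reason the paper leaves the statement as a conjecture.
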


\begin{con}
For every $2<k\in \mathbb{N}$, $\alpha(k)$ has $k+1$ prime factors.
\end{con}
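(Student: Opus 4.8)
The plan is to prove the statement in two halves: a \emph{lower bound} showing that $\alpha(k)$ has at least $k+1$ prime factors, and an \emph{upper bound} showing it has no more. For a prime $q$ and a positive integer $N$, write $v_q(N)$ for the exponent of $q$ in the prime factorization of $N$.

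The lower bound is the accessible half, and I would establish it by proving the more general claim that \emph{every Carmichael number with at most $k$ prime factors already lies in $L_k$}. Indeed, let $n=p_1\cdots p_m$ be a Carmichael number with $m\leq k$. By Korselt's criterion $n$ is square-free and $p_i-1$ divides $n-1$ for each $i$, so for every prime $q$ we have $v_q(p_i-1)\leq v_q(n-1)$. Since $\varphi(n)=\prod_{i=1}^m(p_i-1)$, summing valuations gives
$$v_q(\varphi(n))=\sum_{i=1}^m v_q(p_i-1)\leq m\, v_q(n-1)\leq k\, v_q(n-1)=v_q\big((n-1)^k\big).$$
As this holds for every prime $q$, we conclude $\varphi(n)\mid(n-1)^k$, i.e. $n\in L_k$. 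Taking the contrapositive, any Carmichael number outside $L_k$ must have at least $k+1$ prime factors; in particular $\alpha(k)$ does.

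The upper bound is the genuinely hard half, and is where I expect the real obstacle to lie. What remains is to show that the smallest Carmichael number not in $L_k$ has \emph{exactly} $k+1$ factors rather than more. Two things must be arranged simultaneously. First, one needs the existence of a Carmichael number $N$ with exactly $k+1$ prime factors that fails $N\in L_k$; by the valuation computation above this forces $\sum_{i=1}^{k+1}v_q(p_i-1)>k\,v_q(n-1)$ for some prime $q$, and since each term is at most $v_q(n-1)$ this essentially requires all $k+1$ factors $p_i-1$ to share the maximal $q$-valuation, a congruence condition most naturally imposed at $q=2$. Second, and more seriously, one must show that this minimal $(k+1)$-factor example is smaller than every Carmichael number with $k+2$ or more prime factors that lies outside $L_k$.

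Controlling this cross-comparison between different numbers of prime factors is the crux. Producing, for each $k$, even a single Carmichael number with a prescribed number of prime factors and a prescribed $2$-adic structure already runs into the same difficulties that make Chernick's construction (Proposition \ref{cher}) only conditional, and bounding the size of the smallest such example against all higher-factor competitors appears out of reach of current techniques. For this reason I would expect the lower bound to be provable in full, while the statement that no smaller competitor occurs among numbers with more prime factors remains conjectural — which is precisely why the assertion is stated as a conjecture rather than a theorem.
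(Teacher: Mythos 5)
The statement you were given is one of the paper's closing conjectures; the paper offers no proof of it at all, so there is nothing to compare your argument against --- it stands or falls on its own. On its own terms, your assessment is exactly right, and the half you do prove is correct. The valuation argument for the lower bound is sound: Korselt's criterion gives that $n=p_1\cdots p_m$ is square-free, so $\varphi(n)=\prod_{i=1}^m(p_i-1)$, and $p_i-1\mid n-1$ gives $v_q(p_i-1)\leq v_q(n-1)$ for every prime $q$; summing yields $\varphi(n)\mid(n-1)^m\subseteq$ the condition for $L_k$ whenever $m\leq k$. This is in fact a nice quantitative sharpening of the paper's Proposition \ref{infi}, which only places Carmichael numbers in $L_\infty$: your lemma says a Carmichael number with $m$ prime factors lies in $L_m$, and the Chernick-type examples of Proposition \ref{cher} (Carmichael numbers with $k$ factors in $L_k\setminus L_{k-1}$) show this is sharp. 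Consequently $\alpha(k)$, being a Carmichael number outside $L_k$, has at least $k+1$ prime factors --- consistent with the paper's table, where every listed $\alpha(k)$ with $k\geq 2$ has exactly $k+1$ factors.

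You are also right about where the genuine obstruction lies. The upper bound requires both the existence of a $(k+1)$-factor Carmichael number outside $L_k$ (already conditional, since even producing Carmichael numbers with a prescribed number of prime factors is open --- Chernick's formula is only conjecturally infinitely productive) and, harder still, the minimality comparison against all Carmichael numbers outside $L_k$ with more than $k+1$ factors. No technique in the paper, or in the current literature it cites, addresses that cross-comparison. So your proposal should not be read as a proof of the conjecture --- it is not one, and you say so --- but the provable half is proved correctly, and your diagnosis of why the rest must remain a conjecture matches the status the authors themselves assign it.
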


\bibliography{./refcarmi}
\bibliographystyle{plain}
\end{document}